\documentclass{article}

\usepackage{graphicx}
\usepackage{amsmath}
\usepackage{amssymb}
\usepackage{amsthm}
\usepackage{tikz}
\usepackage{tikz-cd}
\usepackage[all]{xy}

\usetikzlibrary{patterns}
\usetikzlibrary{decorations}
\usetikzlibrary{decorations.pathreplacing}
\usetikzlibrary{topaths}
\usetikzlibrary{shapes.geometric}
\usetikzlibrary{matrix}
\usepackage{tikz-cd}
\tikzcdset{arrow style=tikz,diagrams={>={Stealth[scale=0.9]}}}

\usepackage{comment}
\usepackage[normalem]{ulem}
\usepackage{hyperref}
\usepackage[margin=3cm]{geometry}
\usepackage{amsthm}
\usepackage{amssymb}
\usepackage{graphicx} 
\newtheorem{theorem}{Theorem}

\title{Universal minimal flows of the homeomorphism groups of pseudo-solenoids are non-metrizable}
\author{Jan Boroński\thanks{Faculty of Mathematics and Computer Science, Jagiellonian University in Krak\'ow,
ul. Łojasiewicza 6, 30-348 Kraków, Poland, e-mail: jan.boronski@uj.edu.pl} \thanks{National Supercomputing Centre IT4Innovations, Division of the University of Ostrava, 30. dubna 22, 70103 Ostrava, Czech Republic, e-mail: jan.boronski@osu.cz} \thanks{This article has been supported by EU funds under the project "Increasing the resilience of power grids in the context of decarbonisation, decentralisation and sustainable socioeconomic development", $CZ.02.01.01/00/23\_021/0008759$, through the Operational Programme Johannes Amos Comenius.} \and Aleksandra Kwiatkowska\thanks{Instytut Matematyczny, Uniwersytet Wrocławski, plac Grunwaldzki 2/4, 50-384 Wrocław, Poland, e-mail: aleksandra.kwiatkowska@math.uni.wroc.pl, and
Institut f\"{u}r Mathematische Logik, Universit\"{a}t  M\"{u}nster, Einsteinstraße 62,
48149 M\"{u}nster, Germany}}
\begin{document}

\maketitle
\begin{abstract}
We note that homeomorphism groups of all pseudo-solenoids, including the pseudo-circle, have non-metrizable universal minimal flows. 
\end{abstract}

 For a topological group $G$, a \emph{$G$-flow} (or just a flow, if $G$ is understood) is a continuous action of $G$ on a compact Hausdorff space. A $G$-flow is \emph{minimal} if every orbit is dense. A \emph{universal minimal flow} of $G$ is a minimal $G$-flow that maps continuously and equivariantly onto every other minimal $G$-flow; such a flow is unique up to isomorphism.

A well known open problem, due to Uspenskij~\cite{U}, is to identify the universal minimal flow of the homeomorphism group of the pseudo-arc. It remains unknown whether this flow is metrizable. In fact it is conjectured to be isomorphic to the evaluation action on the pseudo-arc itself.

A {\it pseudo-solenoid} is a circle-like hereditarily indecomposable continuum\footnote{A continuum is said to be {\it circle-like} if it is an inverse limit of circles. A continuum is indecomposable if it is not a union of two proper nondegenerate subcontinua. It is hereditarily indecomposable if every subcontinuum is indecomposable.} with nontrivial 1-st \v Cech cohomology group.
The pseudo-circle is the unique planar pseudo-solenoid. 
Pseudo-solenoids share key properties with the {\it pseudo-arc}, the unique circle-like hereditarily indecomposable continuum with trivial 1-st \v Cech cohomology group (see \cite{F}). Notably,  any proper nondegenerate subcontinuum of each of the pseudo-solenoids and the pseudo-arc is always the pseudo-arc.

Universal minimal flows have been computed for the homeomorphism groups of several classical continua, including the Lelek fan~\cite{BK}, generalized Ważewski dendrites~\cite{Kwi}, and the universal Knaster continuum~\cite{Iyer-Knaster}. While the universal minimal flow is metrizable in the case of the Lelek fan, it is non-metrizable for the universal Knaster continuum as well as for certain generalized Ważewski dendrites.
Furthermore, the non-metrizability of the universal minimal flow has been established for the homeomorphism group of the Menger universal curve along with many other Peano continua~\cite{BCV}, generalizing the techniques developed for the Hilbert cube and closed manifolds of dimension two or higher~\cite{GTZ}. 

Iyer \cite{SI} showed that the group of homeomorphisms of the universal pseudo-solenoid has non-metrizable universal minimal flow. We note that a much more general result holds true.
\begin{theorem}
 Homeomorphism groups of all pseudo-solenoids have non-metrizable universal minimal flows.
\end{theorem}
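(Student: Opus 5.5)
We will use the criterion of Ben Yaacov, Melleray and Tsankov together with Zucker: a Polish group $G$ has metrizable universal minimal flow if and only if it has a closed extremely amenable subgroup $H$ that is co-precompact, i.e.\ $\widehat{G/H}$ is compact. For $G=\mathrm{Homeo}(X)$ with $X$ a pseudo-solenoid it therefore suffices to show that no such $H$ exists. A cleaner route, which we will follow, is to produce a continuous surjective homomorphism, or at least a continuous action, of $G$ on some compact space whose minimal subflows are known to be non-metrizable. Since minimal flows are factors of $M(G)$, this forces $M(G)$ to be non-metrizable. We plan to adapt Iyer's argument for the universal pseudo-solenoid by replacing its specific combinatorics with structure that every pseudo-solenoid possesses.

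\textbf{Step 1 (Cohomology).} Every homeomorphism of $X$ induces an automorphism of $H^1(X)\cong\mathbb{Z}[1/P]$, a rank-one subgroup of $\mathbb{Q}$. The only automorphisms of such a group are multiplication by $\pm1$, possibly composed with units coming from the primes inverted. We will argue that multiplication by an element of $\mathbb{Q}^*$ other than $\pm1$ cannot be realized: by a degree/winding argument, a homeomorphism of a circle-like continuum induces degree $\pm1$ on the circle approximations. This yields a continuous homomorphism $G\to\{\pm1\}$ whose kernel $G_0$ has index at most $2$. Passing to $G_0$ does not change metrizability of $M(G)$ up to a finite extension, so we will work with $G_0$.

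\textbf{Step 2 (Lifting to a covering action).} We will use the canonical map $X\to S^1$, or rather the structure of $X$ as a circle-like continuum with composants, to define a ``rotation number''-type invariant. Concretely, we will lift $G_0$ to its action on the suspension, or on the universal abelian covering $\tilde X$, which is a union of pseudo-arcs arranged over $\mathbb{R}$. As in the theory of circle homeomorphisms, the lifts form a central extension by $\mathbb{Z}$, or by the Pontryagin dual of $H^1$. We aim to embed into $G_0$'s dynamics a free action in the manner of $\mathrm{Homeo}_+(S^1)$ or $\mathbb{Z}$. The key target is to show that $G_0$ admits a nontrivial continuous homomorphism to a discrete group, or an open subgroup of infinite index that is not co-precompact. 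An open subgroup $U$ with $G/U$ infinite would give the flow on $\beta(G/U)$. Its minimal subflows are non-metrizable whenever $G$ acts on the discrete set $G/U$ with enough freedom. Alternatively, if $G$ acts on $G/U$ with an infinite orbit in a non-oligomorphic way, then $U$ cannot be contained in a co-precompact extremely amenable subgroup.

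\textbf{Step 3 (Main obstacle).} The hardest part is showing that the relevant subgroup $U$ is open in the compact-open topology. A natural choice is the stabilizer of a homotopy class of lifts, i.e.\ homeomorphisms with ``winding zero'' relative to a fixed composant structure. We will prove that homeomorphisms sufficiently close to the identity are isotopic to it in a suitable sense. This uses the fact that $\varepsilon$-close maps on a circle-like continuum induce homotopic maps on fine circle-chain covers, so they have the same lift class. The infinitude of $G/U$ will come from the existence of homeomorphisms that "wind" along composants, which we construct via Lewis-type shift homeomorphisms available in every pseudo-solenoid. Finally, we will check that $G$ acts on $G/U$ in a way incompatible with the Zucker criterion, for instance because $U$ is not extremely amenable and no subgroup of $U$ is co-precompact. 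We expect this last verification, and the openness claim, to be where the real work lies.
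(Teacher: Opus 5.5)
\textbf{There is a genuine gap.} Your proposal has no working mechanism, and it breaks exactly where you defer the work. The homotopy data you use in Steps 1--3 are classes of induced maps between fine circular-chain nerves, i.e.\ ``lift classes''. These record only the induced automorphism of $H^1(X)$. For the pseudo-circle, and for every pseudo-solenoid in which no prime occurs infinitely often in $P$, we have $\mathrm{Aut}(H^1(X))=\{\pm1\}$. So your open subgroup $U$ has index at most $2$, and no choice of composant-winding homeomorphisms can make $G/U$ infinite. Lifting to the cyclic cover gives only a central extension of $G_0$ by $\mathbb{Z}$, not a homomorphism $G_0\to\mathbb{Z}$. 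This is exactly the situation of $\mathrm{Homeo}_+(S^1)$, whose universal minimal flow is $S^1$, which is metrizable. (Separately, the ``degree $\pm1$'' claim in Step 1 is unjustified in general: if a prime $p$ recurs infinitely often, multiplication by $p$ is an automorphism of $H^1(X)$, and for the $p$-adic solenoid the shift realizes it.)

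Even an open subgroup of infinite index would not suffice:
\begin{itemize}
\item $S_\infty$ has many such subgroups, yet $M(S_\infty)$ is metrizable.
\item $G$ does not in general act continuously on $\beta(G/U)$.
\item A non-oligomorphic action on $G/U$ does not rule out a co-precompact extremely amenable subgroup, which need have nothing to do with $U$. For example, the automorphism group of the ordered rational Urysohn space is extremely amenable but has infinitely many orbits on pairs of points.
\end{itemize}
Along your lines, what would work is a continuous homomorphism onto an \emph{infinite} discrete group, or onto any group with non-metrizable universal minimal flow. For the pseudo-circle your construction provides none.

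\textbf{The missing idea} is to use the evaluation flow $G\curvearrowright X$ itself.
\begin{itemize}
\item Every pseudo-solenoid admits a minimal homeomorphism \cite[Theorem 9.2]{BCO}. Hence all $G$-orbits in $X$ are dense, and $X$ is a metrizable minimal $G$-flow.
\item Pseudo-solenoids are almost chainable \cite{Bu}, and, being hereditarily indecomposable, they have the property of Kelley \cite{Kelley}. A minimal homeomorphism is fixed-point free. So by \cite[Theorem 5]{KR} no $G$-orbit is a dense $G_\delta$.
\item By Effros's theorem, every orbit is therefore meager.
\item By \cite[Theorem 1.2]{BMT}, a metrizable universal minimal flow of a Polish group has a comeager orbit. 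By \cite[Proposition 14.1]{AKL}, every minimal flow would then have a comeager orbit. This contradicts the previous step, so $M(G)$ is non-metrizable.
\end{itemize}
The obstruction is a metrizable minimal flow with no comeager orbit, not a non-metrizable factor. That is why searching for discrete quotients of $G$ is the wrong place to look.
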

\begin{proof}
By \cite[Theorem 9.2]{BCO} all pseudo-solenoids admit a minimal homeomorphism \footnote{A homeomorphism is minimal if all points have dense orbits.} and by \cite[Theorem 4]{Bu} all pseudo-solenoids are almost-chainable. By \cite[Theorem 5]{KR} the homeomorphism group of an almost-chainable continuum with Kelley property, that admits a fixed-point-free homeomorphism, does not have a dense $G_\delta$-orbit. This applies to pseudo-solenoids as all hereditarily indecomposable continua have the property of Kelley \cite[8.5 Theorem]{Kelley}.  Therefore the homeomorphism group of every  pseudo-solenoid admits a minimal flow with all orbits meager. By \cite[Theorem 1.2]{BMT} and \cite[Proposition 14.1]{AKL} our proof is complete.
\end{proof}

For readers' convenience, let us briefly recall why the pseudo-solenoids admit minimal homeomorphisms (see \cite{BCO} for details). The first proof that the pseudo-circle admits such a homeomorphism is due to Handel in \cite{Ha}, where he constructed
a $C^\infty$-diffeomorphism of the annulus $\mathbb{A}$, with an attracting minimal pseudo-circle $\mathcal{C}$. This diffeomorphism $h$ can be made area-preserving, if one does not require the pseudo-circle to be attracting. In both cases $h|\mathcal C$ is a minimal homeomorphism of the pseudo-circle. The $h|\mathcal C$ is a uniform limit of a sequence of rational rotations (periodic homeomorphisms) $R_n:A_n\to A_n$, where each $A_n$ is an annular neighborhood of $\mathcal C$, with $A_n\subset A_{n+1}$ and $\bigcap_{n=1}^\infty A_n=\mathcal C$. It has a well-defined irrational rotation number, although it is not semi-conjugate to an irrational rotation. 
To obtain a minimal homeomorphism of the $P$-adic pseudo-solenoid, with $P=(p_i:i\in\mathbb{N})$ a sequence of primes, one first notes that the pseudo-circle admits an $n$-fold self-covering for any $n\in\mathbb{N}$. This follows from considering the $n$-fold covering map $\tau_n:\mathbb{A}\to\mathbb{A}$. As observed in \cite[Example 1]{Heath}, $\mathcal C_n=\tau_n^{-1}(\mathcal C)$ is also a pseudo-circle, so $\tau_n|\mathcal C_n$ gives rise to an $n$-fold self-covering of $\mathcal C$. 
In addition, $h$ lifts to a homeomorphism $h_n$ satisfying $\tau_n\circ h_n|\mathcal{C}_n=h\circ \tau_n|\mathcal{C}_n$, with $h_n|\mathcal{C}_n$ minimal. The minimality of $h_n|\mathcal{C}_n$  follows from Handel's construction involving rational rotations: each $h_n$ can be approximated by periodic rotations of crookedly nested annuli obtained from lifting the nested annuli in the base $\mathbb{A}$, and therefore $h_n|\mathcal{C}_n$ is again a homeomorphism as in Handel's construction, in particular, it is minimal \cite[Proposition 2.2]{BCO}. Now let $\mathcal{C}=\mathcal{C}_0$ and consider the inverse limit $\mathcal{C}_P=\lim_{\leftarrow}\{\mathcal C_{p_n},\tau_{p_n}:n\in\mathbb{N}\}$. Since each $C_{p_n}$ is hereditarily indecomposable and circle-like, so is $\mathcal{C}_P$. Since the 1-st \v Cech cohomology group of $C_P$ is the direct limit $\lim_\to\{\mathbb{Z},\tau_{p_n}:n\in\mathbb{N}\}$, it follows that $\mathcal{C}_P$ is the $P$-adic pseudo-solenoid. Finally, let $h_0=h$ and note that the homeomorphism $h_P:\mathcal{C}_P\to\mathcal{C}_P$ given by $h_P((x_n:n\in\mathbb{N}))=(h_{p_n}(x_n):n\in\mathbb{N})$ is minimal, since each $h_n$ is. 
\begin{equation}
\xymatrix{
 \mathcal{C}_0 \ar[d]^{h_0}
& \mathcal{C}_1 \ar[l]^{\tau_1}\ar[d]^{h_1}
& \cdots\ar[l]^{\tau_2}
& \mathcal{C}_{i-1}\ar[l]\ar[d]^{h_{i-1}}
& \mathcal{C}_i \ar[l]^{\tau_i}\ar[d]^{h_i}
& \cdots\ar[l]
&\mathcal{C}_P \ar[d]^{h_P} \\
  \mathcal{C}_0
& \mathcal{C}_1 \ar[l]^{\tau_1}
&\cdots\ar[l]^{\tau_2}
& \mathcal{C}_{i-1}\ar[l]
& \mathcal{C}_i \ar[l]^{\tau_i}
& \cdots\ar[l]
& \mathcal{C}_P}
\end{equation}
It is also important to note that pseudo-solenoids, unlike the pseudo-arc, are not homogeneous. In fact,  every $\mathcal{C}_P$ has uncountably many orbits under the action of its homeomorphism group. This was proved by Kennedy and Rogers \cite[Theorem 9]{KR} for the pseudo-circle, but their arguments extend to all the pseudo-solenoids. Indeed, suppose by contradiction that $\mathcal{C}_P$ has only countably many orbits under the action of its homeomorphism group. Then one of these orbits is second category in $\mathcal{C}_P$, and consequently second category in itself.  By Effros's Theorem \cite{Ef} $\mathcal{C}_P$ has a $G_\delta$-orbit. But every orbit is dense, since $\mathcal{C}_P$ admits a minimal homeomorphism, leading to a contradiction.


\begin{thebibliography}{99}
\bibitem{AKL} {\sc O. Angel, A. S. Kechris and R. Lyons}, {\em Random orderings and unique ergodicity of automorphism
groups}, {\bf J. Eur. Math. Soc. (JEMS) 16} (2014), no. 10, 2059--2095.

\bibitem{BK} {\sc D. Barto\v sov\'a{} and A. Kwiatkowska}, {\em The universal minimal flow of the homeomorphism group of the Lelek fan}, {\bf Trans. Amer. Math. Soc.  371} (2019), no.~10, 6995--7027.

\bibitem{BCV} {\sc G. Basso, A. Codenotti and A. Vaccaro}, {\em Surfaces and other Peano continua with no generic chains}, {\bf Duke Math. J.  174} (2025), no.~14, 3135--3196.

\bibitem{BMT} {\sc I. Ben Yaacov, J. Melleray and T. Tsankov}, {\em Metrizable universal minimal flows of Polish groups have a comeagre orbit.} {\bf Geom. Funct. Anal. 27} (2017), no. 1, 67--77. 

 \bibitem{BCO} {\sc J. P. Boroński, A. Clark and  P. Oprocha}, {\em New exotic minimal sets from pseudo-suspensions of Cantor systems.} {\bf J. Dynam. Differential Equations 35} (2023), 1175--1201.

\bibitem{Bu} {\sc C. E. Burgess}, {\em Homogeneous continua which are almost chainable.} {\bf Canadian J. Math. 13} (1961), 519--528.

\bibitem{Ef} {\sc E. G. Effros}, {\em Transformation groups and $C^*$-algebras}, {\bf Ann. of Math. 81} (1965),
38--55.
\bibitem{F} {\sc L. Fearnley}, {\em Classification of all hereditarily indecomposable circularly chainable continua}, {\bf Trans. Amer. Math. Soc.  168} (1972), 387--401. 

\bibitem{GTZ} {\sc Y. Gutman, T. Tsankov and A. Zucker}, {\em Universal minimal flows of homeomorphism groups of high-dimensional manifolds are not metrizable}, {\bf Math. Ann.  379} (2021), no.~3-4, 1605--1622.

\bibitem{Ha} {\sc M. Handel}, {\em A pathological area preserving $C^\infty$ diffeomorphism of the plane.} \textbf{Proc.
Amer. Math. Soc. 86} (1982), 163--168.

\bibitem{Heath} {\sc J. W. Heath}, {\em Weakly confluent, 2-to-1 maps on hereditarily indecomposable continua}, {\bf Proc. Amer. Math. Soc. 117} (1993), 569--573.

\bibitem{SI}  {\sc S. Iyer}, {\em Universal minimal flows of homeomorphism groups of continua}, arXiv:2606.20407

\bibitem{Iyer-Knaster} {\sc S. Iyer}, {\em The homeomorphism group of the universal Knaster continuum}, {\bf Israel J. Math.  271} (2026), no.~2, 501--536.

\bibitem{Kelley} {\sc J. L. Kelley}, {\em Hyperspaces of a continuum.} {\bf Trans. Amer. Math. Soc. 52 } (1942), 22--36.

\bibitem{KR} {\sc J. Kennedy and  J. T. Rogers Jr.}, {\em Orbits of the pseudocircle.} {\bf Trans. Amer. Math. Soc. 296} (1986), 327--340.


\bibitem{Kwi} {\sc A. Kwiatkowska}, {\em Universal minimal flows of generalized Wa\.zewski dendrites}, {\bf J. Symb. Log.~83} (2018), no.~4, 1618--1632.

\bibitem{U} {\sc V.~V. Uspenskij}, {\em On universal minimal compact $G$-spaces}, {\bf Topology Proc.  25} (2000), Spring, 301--308.

\end{thebibliography}
\end{document}